\documentclass[11pt, reqno]{amsart}
\usepackage{version}
\usepackage{amssymb}
\setlength{\textheight}{20cm} \textwidth16cm \hoffset=-2truecm
\usepackage[]{enumitem}
\usepackage{enumitem}

\usepackage{hyperref}
\usepackage{lipsum}

\newcommand{\myfootnote}[1]{
    \renewcommand{\thefootnote}{}
    \footnotetext{\hspace{-2pt}\scriptsize#1}
    \renewcommand{\thefootnote}{\arabic{footnote}}
}


\theoremstyle{definition}
\newtheorem{thm}{Theorem}[section]

\newtheorem{lem}{Lemma}[section]

\newtheorem{rk}{Remark}[section]

\usepackage{dirtytalk}
\usepackage{mathrsfs}
\usepackage{amsmath, amsfonts, amssymb,amsthm}
\newcommand{\field}[1]{\mathbb{#1}}
\newcommand{\N}{\field{N}}

\newcommand{\R}{\field{R}}
\newcommand{\C}{\field{C}}
\newcommand{\B}{\field{B}}

\DeclareMathOperator{\imm}{Im}
\DeclareMathOperator{\id}{Id}

\DeclareMathOperator{\spann}{Span}

\numberwithin{equation}{section}

%
%
%
%

\begin{document}
\title[]{Approximation and accumulation results of holomorphic mappings with dense image}
\author{Giovanni D. Di Salvo}
\address{Department of Mathematics\\
University of Oslo, Postboks 1337 Blindern\\
0316 Oslo\\
Norway}
\email{giovannidomenico.disalvo@gmail.com}

%
%
\myfootnote
{
    Published in \emph{Mathematica Scandinavica},
    2023,
    volume~129,
    issue~2,
    pp.~273--283.
    }    
\myfootnote
{
	\doi{10.7146/math.scand.a-136450}.
}
\begin{abstract}
We present four approximation theorems for manifold--valued mappings. The first one approximates holomorphic embeddings on pseudoconvex domains in $\C^n$ with holomorphic embeddings with dense images.
The second theorem approximates holomorphic mappings on complex manifolds with bounded images with holomorphic mappings with dense images.
The last two theorems work the other way around, constructing (in different settings) sequences of holomorphic mappings (embeddings in the first one) converging to a mapping with dense image defined on a given compact minus certain points (thus in general not holomorphic).
\end{abstract}

\maketitle
\section{Introduction}

Let $\triangle$ denote the unit disc in $\C$, $Y$ be a connected complex manifold, and $\mathcal O(\triangle,Y)$ denote the set of all analytic discs in $Y$ , i.e., all holomorphic maps $f\colon\triangle\to Y$ from $\triangle$ into $Y$.
The motivation for this study is a result of F. Forstneri\v c and J. Winkelmann \cite{fourthpaper:F17} which states
that the set $\mathcal D\subseteq \mathcal O(\triangle,Y)$ consisting of all analytic discs with dense image is dense in
$\mathcal O(\triangle,Y)$ with respect to the compact convergence topology.

Another result accounting this topic can be found in \cite{fourthpaper:AF18}: given a closed complex submanifold $X\subsetneq\C^n$, for $n>1$, there exists a \emph{complete} (the image of every divergent path in $X$ has infinite length in $\C^n$) holomorphic embedding $f\colon X\hookrightarrow \C^n$ with everywhere dense image; for $n=1$ the same result holds for complete holomorphic embeddings $f\colon\C\hookrightarrow\C^2$.
If moreover $X\cap\B^n\neq0$ and $K\subset X\cap\B^n$ is compact, there exists a Runge domain $\Omega\subset X$ containing $K$ which admits a complete holomorphic embedding $f\colon \Omega\hookrightarrow \B^n$ with dense image.

Finally in \cite{fourthpaper:FS77} it is proved the existence of a holomorphic injective mapping with dense image from the open unit polydisc in $\C^m$ to an $m$--dimensional paracompact connected complex manifold $M$.\\
\newline

The purpose of this paper is to generalize the theorem of Forstneri\v c and Winkelman
in four different ways. Before we can state our results we need to introduce some notation
and definitions.\\
\newline

Let $X$ and $Y$ complex connected manifolds and let $\mathcal O(X,Y)$ denote the set of all holomorphic maps $h\colon X\to Y$. We equip $\mathcal O(X,Y)$ with the \emph{compact--open} topology. We are interested in the density of certain subclasses of $\mathcal O(X,Y)$ with respect to this topology, so we assume that we have metrics 
$d_X$ and $d_Y$ defining them.\\
\newline

In the case when $\dim X \le \dim Y$ we let $\mathcal O_{\operatorname{e}}(X,Y)$  denote the set of all holomorphic
embeddings of $X$ into $Y$, i.e., mappings that are homeomorphic onto the image. We let $\mathscr H(X,Y)$ denote the set of all holomorphic maps with dense image in $Y$,
$$
\mathscr H(X,Y):=\{h\in\mathcal O(X,Y)\;:\;\overline{h(X)}=Y\}
$$
and $\mathscr G(X,Y)$ denote the set of all holomorphic maps that are non--constant and have relatively compact image in $Y$,
$$
\mathscr G(X,Y):=\{g\in\mathcal O(X,Y)\;:\;g\;\mbox{is non--constant},\;g(X)\subset\subset Y\}\:.
$$
Observe that $\mathscr G(X,Y)=\emptyset$ whenever $X$ is either compact or euclidean. In both cases all holomorphic maps are constant, for in the euclidean case Liouville's theorem states that bounded maps are constant. We have $\mathscr G(X,Y)\neq\emptyset$ for every relatively compact subdomain $X$ of a Stein manifold $Y$.\\
\newline

We say that $\mathscr H(X,Y)$ \emph{compactly approximates} $\mathscr G(X,Y)$ if for every $g\in\mathscr G(X,Y)$, every compact subset $M$ of $X$ and every $\epsilon>0$, there exists $h\in\mathscr H(X,Y)$ such that
$$
\sup_{x\in M}d_Y(g(x),h(x))<\epsilon\;.
$$

We say that a point $\zeta$ in a compact subset $K$ of a complex manifold $Y$ is a \emph{locally exposable point} in $K$ if there exists a $\mathscr{C}^2$--smooth strictly plurisubharmonic function $\rho$ on some open neighborhood $U$ of $\zeta$ such that
\begin{enumerate}
\item $\rho(\zeta)=0$ and $d\rho(\zeta)\neq0$\;, and
\item $\rho<0$ on $(K\cap U)\setminus\{\zeta\}$\;.
\end{enumerate}

This concept was first introduced in \cite{fourthpaper:DFW18} and it generalizes the more standard concept of \emph{local peak point}, see \cite{fourthpaper:Ran86}, pag. 354. Now we can state all our main results.

\begin{thm}\label{fourthpaper:thm1}
If $X\subset\C^n$ is a pseudoconvex domain which is bounded and star--shaped with respect to $0$, and $n\le\dim Y$, then the set of all $h\in \mathcal O_{\operatorname{e}}(X,Y)$ with dense image in $Y$ is a dense subset of $\mathcal O_{\operatorname{e}}(X,Y)$.   
\end{thm}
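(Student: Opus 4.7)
The plan is to combine the Forstneri\v c-Winkelmann theorem for analytic discs with a Baire-category style iterative construction. Let $f\in\mathcal O_{\operatorname{e}}(X,Y)$, a compact $K\subset X$, and $\epsilon>0$ be given; we must produce $h\in\mathcal O_{\operatorname{e}}(X,Y)$ with $\overline{h(X)}=Y$ and $\sup_{x\in K}d_Y(h(x),f(x))<\epsilon$. Fix a countable dense subset $\{y_k\}_{k\ge1}\subset Y$ and, using that $X$ is star-shaped with respect to $0$, the exhaustion $X_n:=(1-1/n)X\subset\subset X$. Starting from $h_0:=f$, construct inductively embeddings $h_k\in\mathcal O_{\operatorname{e}}(X,Y)$ such that $\sup_{x\in\overline{X_{n_k}}}d_Y(h_k(x),h_{k-1}(x))<\epsilon_k$ for an exhausting choice $n_k\uparrow\infty$, and $h_k(X)\cap B_Y(y_k,1/k)\neq\emptyset$. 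Choosing the $\epsilon_k$ summable and small relative to the \emph{embedding margin} of $h_{k-1}$ (a lower bound on $\|dh_{k-1}^{-1}\|$ on $\overline{X_{n_{k-1}}}$ together with a lower bound on the distance between images of disjoint compacta), the limit $h:=\lim_k h_k$ exists locally uniformly on $X$, is a holomorphic embedding, and has dense image by construction.

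The essential inductive step is the claim: given an embedding $g\colon X\hookrightarrow Y$, a compact $K'\subset X$, a target $y\in Y$, and $\delta>0$, there exists $g'\in\mathcal O_{\operatorname{e}}(X,Y)$ with $\sup_{x\in K'}d_Y(g'(x),g(x))<\delta$ and $g'(X)\cap B_Y(y,\delta)\neq\emptyset$. The star-shape of $X$ supplies $p\in X\setminus K'$ and a small affine disc $\phi\colon\triangle\to X$, $\phi(\zeta)=p+r\zeta v$, whose closure is disjoint from $K'$. By the Forstneri\v c-Winkelmann theorem the analytic disc $g\circ\phi\in\mathcal O(\triangle,Y)$ can be approximated, in the compact-open topology, by some $\psi\in\mathcal O(\triangle,Y)$ with $\overline{\psi(\triangle)}=Y$, so that $\psi(\zeta_0)\in B_Y(y,\delta/2)$ for a suitable $\zeta_0\in\triangle$. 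It remains to transport this disc approximation to a global modification of $g$: work in a coordinate chart of $Y$ containing the curve traced by $\psi$, and use that $X$ is Stein (being pseudoconvex in $\C^n$) together with Runge/Cartan approximation on the disjoint compacta $K'$ and $\phi(\overline{\triangle_{1-\kappa}})$ to add to $g$ (read in the chart) a small $\C^{\dim Y}$-valued holomorphic correction that is close to zero on $K'$ and matches an approximation of $\psi$ on $\phi(\triangle)$; by chaining finitely many charts along a curve from $g(p)$ to $y$ one may move across $Y$ until reaching the target.

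The main obstacle is to preserve injectivity and properness of the embedding after the perturbation, since the patching argument controls $g'$ only in the $C^0$-norm. The remedy is quantitative: localize the modification in a thin tube $U\supset\phi(\overline{\triangle_{1-\kappa}})$ with $\overline U\cap K'=\emptyset$, and demand that the sup-norm of the perturbation be much smaller than $\dist_Y\!\bigl(g(\overline U),g(X\setminus U)\bigr)$ and than the reciprocal of the Lipschitz constant of $g^{-1}$ on $g(K'\cup\overline U)$. Both quantities are strictly positive since $g$ is an embedding, so the modified $g'$ remains an injective holomorphic immersion on the relevant compacta. Propagating these quantitative margins through the inductive scheme on the ever-larger exhausting sets $X_{n_k}$ is the delicate bookkeeping step; the star-shape of $X$ is essential throughout to guarantee that ``new room'' outside a given compact is always available for placing new discs $\phi$.
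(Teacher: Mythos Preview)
Your overall iterative scheme is reasonable, but the crucial inductive step has a genuine gap: you cannot simultaneously (a) move some point of $g'(X)$ from near $g(p)$ to near an arbitrary $y\in Y$, and (b) keep the perturbation small enough for the ``small perturbation of an embedding is an embedding'' argument to apply. The holomorphic correction you build via Runge approximation is small on $K'$ but must be \emph{large} on the disc $\phi(\triangle)$ (of order $d_Y(g(p),y)$, not $\delta$), and since it is holomorphic on all of $X$ you have no control on the region between $K'$ and your tube $U$, nor on $U$ itself. Nothing prevents $g'(U)$ from meeting $g'(X\setminus U)$, or $g'$ from failing to be an immersion somewhere in $X\setminus K'$. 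The quantitative condition you state, that the sup-norm of the perturbation be much smaller than $\dist_Y\bigl(g(\overline U),g(X\setminus U)\bigr)$, is simply incompatible with reaching $y$. The Forstneri\v c--Winkelmann disc $\psi$ does not help: it has dense image but is not injective, so matching it on $\phi(\triangle)$ would also destroy injectivity there. Finally, for a general $Y$ (not assumed Stein in this theorem) the phrase ``add a $\C^{\dim Y}$-valued correction in a chart and chain through finitely many charts'' is not a construction; there is no ambient linear structure in which to add.

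The paper avoids all of this by working on the \emph{target} side. Star-shapedness is used to extend $g$ injectively across $\overline X$, so that $K_1:=g(\overline X)$ is a Stein compact with a locally exposable point $\zeta_1$. Then one invokes the exposing-points theorem of Deng--Forn\ae ss--Wold: given a Stein compact $K_k\subset Y$ with locally exposable point $\zeta_k$ and a target $q_{n_k}\in Y\setminus K_k$, there is an \emph{injective} holomorphic map $f_k$ on a neighborhood of $K_k$ with $f_k(\zeta_k)=q_{n_k}$ and $f_k$ close to the identity off an arbitrarily small neighborhood of $\zeta_k$. The approximant is $h=F\circ g$ with $F=\lim_k f_k\circ\cdots\circ f_1$; injectivity of each $f_k$ is precisely what carries the embedding property through the iteration. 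The idea missing from your proposal is this target-side injective push map; attempting to realize the motion by a source-side additive correction is where the argument breaks.
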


\begin{thm}\label{fourthpaper:thm2}
If $Y$ is Stein and $X$ is any connected manifold, then $\mathscr H(X,Y)$ compactly approximates $\mathscr G(X,Y)$.
\end{thm}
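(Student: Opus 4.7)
My plan is to construct $h$ as the limit of an inductive sequence $g=g_0,g_1,g_2,\ldots\in\mathcal O(X,Y)$ whose images successively approach a prescribed dense sequence of $Y$. Observe first that any holomorphic map from a compact connected complex manifold to a Stein manifold is constant, so $\mathscr G(X,Y)\neq\emptyset$ forces $X$ to be non-compact. I fix a compact exhaustion $M\subset L_1\subset L_2\subset\cdots$ of $X$, points $x_j\in L_{j+1}\setminus L_j$, and a countable dense sequence $\{y_j\}_{j\ge 1}\subset Y$, and arrange that
\[
\sup_{L_j}d_Y(g_j,g_{j-1})<\epsilon/2^j\quad\text{and}\quad d_Y(g_j(x_j),y_j)<1/j.
\]
A telescoping argument then produces a holomorphic limit $h=\lim g_j\in\mathcal O(X,Y)$ with $\sup_M d_Y(h,g)<\epsilon$ and $d_Y(h(x_j),y_j)\to 0$, so that $h(X)$ is dense in $Y$.

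The Stein hypothesis enters through two standard tools. By Remmert--Bishop--Narasimhan I fix a closed embedding $\iota\colon Y\hookrightarrow\C^N$, and by Docquier--Grauert a holomorphic retraction $r\colon\Omega\to Y$ from an open neighborhood $\Omega\subset\C^N$ of $Y$. Given $g_{j-1}$ with $g_{j-1}(X)\subset\subset Y$, I join $p_0:=g_{j-1}(x_j)$ to $y_j$ by a polygonal path $p_0,p_1,\ldots,p_m=y_j$ in $Y$ whose consecutive points are $\delta$-close in $\C^N$, with $\delta>0$ fixed so small that the $\delta$-tube around $g_{j-1}(X)$ in $\C^N$ is contained in $\Omega$. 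Iterating for $i=1,\ldots,m$ from $g^{(0)}:=g_{j-1}$, I define
\[
\tilde g^{(i)}(x):=g^{(i-1)}(x)+F_i\bigl(g^{(i-1)}(x)\bigr)(p_i-p_{i-1}),\qquad g^{(i)}:=r\circ\tilde g^{(i)},
\]
where $F_i\in\mathcal O(Y)$ satisfies $F_i(p_{i-1})=1$, is uniformly bounded on $g^{(i-1)}(X)$, and is arbitrarily small on $g^{(i-1)}(L_j)$. Such $F_i$ exist by Cartan's Theorems A--B (equivalently Oka--Weil approximation) on the Stein manifold $Y$, provided $p_{i-1}$ lies outside the holomorphically convex hull $\widehat{g^{(i-1)}(L_j)}$. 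By the choice of $\delta$, $\tilde g^{(i)}(X)\subset\Omega$, so $g^{(i)}$ is a well-defined holomorphic map $X\to Y$; since $F_i\approx 1$ near $p_{i-1}\approx g^{(i-1)}(x_j)$, the value $g^{(i)}(x_j)$ is pushed close to $p_i$, while $F_i\approx 0$ on $g^{(i-1)}(L_j)$ keeps $g^{(i)}|_{L_j}$ close to $g^{(i-1)}|_{L_j}$. Setting $g_j:=g^{(m)}$ closes the inductive step.

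The main technical obstacle is arranging the separation $p_{i-1}\notin\widehat{g^{(i-1)}(L_j)}$ at every substep so that Oka--Weil can be invoked. The initial point $p_0=g_{j-1}(x_j)$ is placed outside $\widehat{g_{j-1}(L_j)}$ by a judicious choice of $x_j\in L_{j+1}\setminus L_j$: since $g_{j-1}$ is non-constant on the open set $X\setminus L_j$ by the identity principle, generically there exist such $x_j$ whose image avoids the proper compact hull. The terminal point $y_j$ may, by density of $\{y_j\}$ and the tolerance $1/j$, be replaced by a nearby point outside the hull, and the polygonal path is routed through $Y\setminus\widehat{g_{j-1}(L_j)}$. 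That the hulls $\widehat{g^{(i-1)}(L_j)}$ do not drift substantially with $i$ follows from the uniform smallness of the cumulative perturbation on $L_j$, so the path choice remains valid throughout the substeps.
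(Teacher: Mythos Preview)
Your overall strategy (iteratively push the image toward a dense sequence while barely moving a growing compact) matches the paper's, but the \emph{mechanism} you use for the push is different and, as written, has a genuine gap.

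The paper does not work in an ambient $\C^N$ with a Docquier--Grauert retraction. Instead it takes a strictly plurisubharmonic exhaustion $\rho$ of $Y$, locates a locally exposable boundary point $\zeta_k$ of the current image $K_k=\overline{F_{k-1}(g(X))}$ on a level set $\{\rho=c_k\}$, and applies the exposing theorem of Deng--Forn\ae ss--Wold to obtain a holomorphic $f_k\colon U_k\to Y$ defined on a neighbourhood of the Stein compact $Y_{c_k}\supset K_k$, with $f_k(\zeta_k)=q_{n_k}$ and $\|f_k-\id\|_{Y_{c_k}\setminus V_k'}<\epsilon_k$. The crucial point is that $f_k$ is a map $Y\supset U_k\to Y$ controlled on all of $K_k$, so the next compact $K_{k+1}=f_k(K_k)$ is automatically compact and one never leaves $Y$.

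In your construction the perturbation $\tilde g^{(i)}(x)=g^{(i-1)}(x)+F_i\bigl(g^{(i-1)}(x)\bigr)(p_i-p_{i-1})$ has norm bounded by $\|F_i\|_{\,\overline{g^{(i-1)}(X)}}\cdot|p_i-p_{i-1}|$, not by $|p_i-p_{i-1}|$. Oka--Weil only gives you $F_i(p_{i-1})=1$ and $|F_i|$ small on $g^{(i-1)}(L_j)$; it gives no bound on $F_i$ over the rest of $g^{(i-1)}(X)$, so your assertion ``by the choice of $\delta$, $\tilde g^{(i)}(X)\subset\Omega$'' is unjustified. Consequently $g^{(i)}=r\circ\tilde g^{(i)}$ need not be defined on all of $X$, and even if it is, you have not shown that $g_j(X)\subset\subset Y$, which your own induction hypothesis requires at the next step. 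One could try to adapt the step size $|p_i-p_{i-1}|$ to the a posteriori bound $\|F_i\|_{\,\overline{g^{(i-1)}(X)}}$, but then there is no reason the substeps terminate before reaching $y_j$, since these bounds may blow up. The paper's use of exposable boundary points sidesteps exactly this difficulty: the push map is produced with built-in $\mathscr C^0$ control on the whole of the current compact image, not only on the part coming from $L_j$.

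A secondary point: your claim that one can choose $x_j\in L_{j+1}\setminus L_j$ with $g_{j-1}(x_j)\notin\widehat{g_{j-1}(L_j)}$ is correct but not for the vague reason you give. The clean argument is that $\rho\circ g_{j-1}$ is a non-constant plurisubharmonic function on $X$ (strict plurisubharmonicity of $\rho$ forces this), hence by the maximum principle does not attain $\sup_X\rho\circ g_{j-1}$; since $\widehat{g_{j-1}(L_j)}\subset\{\rho\le\max_{L_j}\rho\circ g_{j-1}\}$, any $x$ with $\rho(g_{j-1}(x))$ close enough to the supremum works. This also forces you to choose $L_{j+1}$ \emph{after} $x_j$, not before.
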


\begin{thm}\label{fourthpaper:thm3}
Let $K$ be a connected Stein compact in $Y$, which is not finite.  Then the set of locally exposable points in $K$ is non--empty and for every such point $x_0\in K$, there exist an open neighborhood $U_k$ of $K$ and an injective map $F_k\in\mathcal O(U_k,Y)$ which converges as $k\to+\infty$ to some $F\colon K\setminus\{x_0\}\to Y$ such that $\overline{F(K\setminus\{x_0\})}=Y$.
\end{thm}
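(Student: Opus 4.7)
My plan is first to exhibit a locally exposable point by exploiting the Stein structure, and then to build $\{F_k\}$ by an inductive deformation procedure anchored at such a point.

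For the existence of a locally exposable point, since $K$ is a Stein compact one may properly embed a Stein neighborhood of $K$ into some $\C^N$ and view $K$ as a compact subset of $\C^N$. For a generic $p\in\C^N\setminus K$, the distance function $z\mapsto|z-p|^2$ attains its maximum on $K$ at a unique point $\zeta_0$. Pulled back to $Y$ via the embedding, $\rho(z):=|z-p|^2-|\zeta_0-p|^2$ is strictly plurisubharmonic on a neighborhood $U$ of $\zeta_0$ and satisfies $\rho(\zeta_0)=0$, $d\rho(\zeta_0)\neq 0$, and $\rho<0$ on $(K\cap U)\setminus\{\zeta_0\}$, showing that $\zeta_0$ is locally exposable.

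For the second assertion, fix an exposable point $x_0\in K$ with exposing function $\rho$ on a neighborhood $V$, and let $K_m\subset K\setminus\{x_0\}$ be an increasing exhaustion by compacts obtained by removing shrinking neighborhoods $N_m$ of $x_0$ cut out by sublevel sets of $\rho$. Enumerate a dense sequence $\{y_j\}_{j\ge1}\subset Y$. Starting from $F_0$ equal to the identity on a Stein neighborhood $U_0\supset K$, I inductively build Stein neighborhoods $U_k\supset K$ and injective $F_k\in\mathcal O(U_k,Y)$ satisfying
\[
\sup_{x\in K_k}d_Y(F_k(x),F_{k-1}(x))<2^{-k}\qquad\text{and}\qquad \inf_{x\in K_{k+1}}d_Y(F_k(x),y_k)<2^{-k}.
\]
The first estimate makes $\{F_k\}$ Cauchy uniformly on each $K_m$, hence convergent on $K\setminus\{x_0\}=\bigcup_m K_m$ to some $F$. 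Since $K_{k+1}\subset K_{k+j}$ for every $j\ge 1$, the first estimate also implies that if $z_k\in K_{k+1}$ achieves the second one at step $k$, then $F(z_k)$ is within $2^{1-k}$ of $y_k$; hence $\{y_j\}\subset\overline{F(K\setminus\{x_0\})}$ and $F$ has dense image in $Y$.

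The inductive step is the heart of the argument. Given $F_{k-1}$, pick a small open neighborhood $V_k\Subset V$ of $x_0$ disjoint from $K_k$ and with $F_{k-1}(\overline{V_k})$ contained in a single holomorphic chart of $Y$ around $F_{k-1}(x_0)$. In that chart, additive displacement by a small vector defines a local deformation of $F_{k-1}|_{V_k}$ that pushes $F_{k-1}(x_0)$ in any prescribed direction. Because $K_k$ and $\overline{V_k}$ are separated by sublevel sets of the strictly plurisubharmonic function $\rho$, Runge--Oka approximation on the Stein neighborhood $U_{k-1}$ extends this local deformation to a holomorphic map on a smaller Stein neighborhood of $K$ that coincides with $F_{k-1}$ on $K_k$ and with the local deformation on $V_k$, up to arbitrarily small error. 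Iterating this elementary push along a finite chain of charts covering a continuous path in $Y$ from $F_{k-1}(x_0)$ to $y_k$, with each step small enough that the resulting map remains an embedding (embeddings are open in $\mathcal O(U_k,Y)$ with the compact--open topology), one arranges that some point in $K_{k+1}\setminus K_k$ is mapped within $2^{-k}$ of $y_k$, producing $F_k$ with both required properties. The main obstacle is exactly this step: one must move the image arbitrarily far across $Y$ using only holomorphic perturbations concentrated near the single point $x_0$, while keeping $F_k|_{K_k}$ close to $F_{k-1}$ and preserving injectivity on a whole neighborhood of $K$. The exposing function $\rho$ is what makes it work, by separating $x_0$ from $K_k$ via sublevel sets of a strictly plurisubharmonic function so that Runge--Oka approximation on the Stein neighborhoods can localize the perturbation; connectedness of $Y$ then lets one reach any target $y_k$ by a finite chain of elementary moves.
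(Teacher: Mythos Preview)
Your overall architecture---exhaust $K\setminus\{x_0\}$ by compacts $K_m$, inductively build injective holomorphic $F_k$ that barely move on $K_k$ while reaching near $y_k$, then pass to the limit and invoke a density estimate---is exactly the skeleton the paper uses (its Lemma~2.1 plays the role of your Cauchy/density bookkeeping). Your existence argument for a locally exposable point via embedding a Stein neighborhood in $\C^N$ and maximizing $|z-p|^2$ is different from the paper's (which uses a strictly plurisubharmonic exhaustion of a Stein neighborhood and Remark~2.1), but it is a legitimate alternative.

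The substantive divergence is in the inductive step. The paper does \emph{not} build the deformation by hand: it invokes the exposing theorem of Deng--Forn\ae ss--Wold (stated here as Theorem~2.1) to produce, in one stroke, an injective holomorphic $f_k$ on a neighborhood of the current Stein compact with $f_k(\zeta_k)=q_{n_k}$ and $\|f_k-\id\|<\epsilon_k$ off an arbitrarily small neighborhood $V_k'$ of $\zeta_k$; injectivity comes from choosing the curve $\gamma_k$ with $\gamma_k((0,1])\subset Y\setminus K_k$, and the next exposable point is simply $\zeta_{k+1}:=f_k(\zeta_k)$ with exposing function $\rho_k\circ f_k^{-1}$. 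Your ``elementary pushes plus Runge--Oka'' paragraph is, in effect, an attempt to reprove that theorem, and the sketch leaves real gaps:
\begin{itemize}
\item The exposing function $\rho$ is only defined on a neighborhood of $x_0$, so ``sublevel sets of $\rho$ separate $K_k$ and $\overline{V_k}$'' is a local statement; it does not by itself yield that $K_k\cup\overline{V_k}$ is $\mathcal O(U_{k-1})$-convex, which is what Runge--Oka approximation of your locally defined two-piece map actually requires.
\item Openness of embeddings only controls a \emph{single} small perturbation. Your chain of pushes moves the image of $V_k$ a distance comparable to $d_Y(F_{k-1}(x_0),y_k)$, which is large; after several steps the intermediate map $F^{(j)}$ has a stretched ``collar'' between $V_k$ and $K_k$, and you have not argued why $F^{(j)}(V_k)$ avoids $F^{(j)}(K_k)$ (this is precisely where the paper uses that the curve $\gamma_k$ lies outside the compact, a hypothesis with no analogue in your sketch).
\end{itemize}
None of this is unfixable, but what is missing is exactly the technical content of the exposing theorem. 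Either cite that result as the paper does, or supply a genuine proof of the localization/injectivity step rather than an appeal to Runge--Oka in the abstract.
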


\begin{thm}\label{fourthpaper:thm4}
Let $Y$ be a Stein manifold, $K\subset Y$ connected not finite compact.\\
Denote with $\Gamma$ the closure in $Y$ of the set of locally exposable points for $K$, which is non--empty. Then there exist $U_k\subset Y$ open neighborhood of $K$ and $F_k\in\mathcal O(U_k,Y)$ converges as $k\to+\infty$ to some $F\colon K\setminus\Gamma\to Y$ such that $\overline{F(K\setminus\Gamma)}=Y$.
\end{thm}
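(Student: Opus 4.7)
The plan is to iterate the construction of Theorem~\ref{fourthpaper:thm3} by a diagonal / Mittag--Leffler induction, using a countable dense subset of locally exposable points in place of the single point $x_0$. Since $Y$ is second countable (being Stein), I would first fix a countable dense sequence $\{\zeta_m\}_{m\geq 1}$ of locally exposable points of $K$ (whose closure is $\Gamma$), a countable dense sequence $\{y_n\}_{n\geq 1}$ in $Y$, a compact exhaustion $L_1\subset L_2\subset\cdots$ of $K\setminus\Gamma$ with $\dist(L_j,\Gamma)>0$, and an enumeration $k\mapsto(n_k,m_k)$ of $\N\times\N$ in which each index $n$ recurs infinitely often.

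The main induction will build maps $F_k\in\mathcal O(U_k,Y)$ on open neighborhoods $U_k$ of $K$, together with ``hit points'' $a_k\in K\setminus\Gamma$ chosen near $\zeta_{m_k}$, so that at each step $k$ two conditions hold: first, $\sup_{x\in L_k}d_Y(F_k(x),F_{k-1}(x))<2^{-k}$ and $d_Y(F_k(a_j),F_{k-1}(a_j))<2^{-k}$ for every $j<k$; second, $d_Y(F_k(a_k),y_{n_k})<2^{-k}$. Both conditions will be obtained by invoking the local perturbation from the proof of Theorem~\ref{fourthpaper:thm3} at the point $\zeta_{m_k}$: since $\Gamma$ is disjoint from $L_k$ and from the finite set of earlier hit points, the perturbation can be concentrated in a neighborhood of $\zeta_{m_k}$ small enough not to disturb the first condition, while the flexibility available at a locally exposable point provides a nearby $a_k\in K\setminus\Gamma$ whose $F_k$-image can be pushed arbitrarily close to $y_{n_k}$.

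With these two conditions in hand, the telescoping bound $\sum_{\ell>k}2^{-\ell}<2^{-k}$ will give uniform convergence $F_k\to F$ on each $L_j$, defining $F\colon K\setminus\Gamma\to Y$, together with $d_Y(F(a_k),y_{n_k})<2^{1-k}$. Because $\{y_n\}$ is dense in $Y$ and each $n$ occurs as $n_k$ infinitely often, this forces $\overline{F(K\setminus\Gamma)}=Y$, as required.

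The step I expect to be the main obstacle is the second condition in the inductive step: producing a hit point $a_k\in K\setminus\Gamma$ whose $F_k$-image is close to $y_{n_k}$ by a perturbation localised near $\zeta_{m_k}\in\Gamma$. Locally exposable points lie in $\partial K$ (otherwise the exposing function $\rho$ would attain an interior maximum, contradicting $d\rho(\zeta)\neq 0$), hence $\Gamma\subset\partial K$ and every interior point of $K$ belongs to $K\setminus\Gamma$. Under the natural regularity condition that $K$ coincides with the closure of its interior, points of $K\setminus\Gamma$ accumulate at each $\zeta_{m_k}$, so the approximating point furnished by the construction of Theorem~\ref{fourthpaper:thm3} can indeed be chosen in $K\setminus\Gamma$; verifying this ``locality'' feature of that earlier construction is the technical heart of the argument.
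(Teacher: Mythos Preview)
Your proposal takes a genuinely different route from the paper and, as you yourself anticipate, runs into a real obstacle that the paper's argument sidesteps entirely.

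The paper does not pre-select locally exposable points of $K$ and does not use any diagonal enumeration. It simply reruns the iteration of Theorem~\ref{fourthpaper:thm2}: at step $k$ one has $K_k=F_{k-1}(K)$, picks $c_k$ with $K_k\subset Y_{c_k}$ and $K_k\cap\partial Y_{c_k}\ne\emptyset$, and (after the small perturbation of \S\ref{fourthpaper:elep2} if needed) obtains $\zeta_k\in K_k\cap\partial Y_{c_k}\setminus C$. This $\zeta_k$ is locally exposable \emph{for the Stein compact $Y_{c_k}$}, so Theorem~\ref{fourthpaper:Erlend} applies with $Y_0=Y_{c_k}$ and yields $f_k$ on a neighbourhood of $Y_{c_k}\supset K_k$. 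The preimage $x_k\in K$ of $\zeta_k$ is then locally exposable for $K$ (via $\rho\circ F_{k-1}$), hence $x_k\in\Gamma$. That is the \emph{only} role of $\Gamma$: it is a fixed set guaranteed a priori to contain all the unpredictable points $x_k$, so that a compact exhaustion $\{C_k\}$ of $K\setminus\Gamma$ automatically misses every $V_k'$. Density of $F(K\setminus\Gamma)$ then follows from Lemma~\ref{fourthpaper:tech} exactly as in the proof of Theorem~\ref{fourthpaper:thm2}; no ``hit points'' $a_k$ are needed.

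Your scheme has two genuine gaps. First, to invoke the exposing theorem at your chosen $\zeta_{m_k}$ you need a Stein compact in which $\zeta_{m_k}$ (or its image under $F_{k-1}$) is locally exposable. Here $K$ is \emph{not} assumed Stein compact (that was the hypothesis of Theorem~\ref{fourthpaper:thm3}, not of Theorem~\ref{fourthpaper:thm4}), and a prechosen $\zeta_{m_k}$ has no reason to lie on $\partial Y_c$ for any $c$; after a non-injective $F_{k-1}$ its image need not be exposable in $K_k$ either. Second, your density mechanism requires points of $K\setminus\Gamma$ to accumulate at each $\zeta_{m_k}$, which you secure by adding the hypothesis that $K$ equals the closure of its interior; this is not assumed in the theorem. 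The paper avoids both issues: it manufactures $\zeta_k$ on the fly from the ambient exhaustion $\rho$ of $Y$, and it gets density from Lemma~\ref{fourthpaper:tech} (each $q_n$ lies in some $K_{k(n)}$, hence $d(q_n,F(K\setminus\Gamma))\le\sum_{j\ge k(n)-1}\epsilon_j$) rather than from individual preimages.
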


\akn I warmly thank E. F. Wold for directing me to write this paper.

\medskip

\section{Technical Tools}

The proofs will extensively exploit Theorem 1.1 in \cite{fourthpaper:DFW18} and a slightly different version of it (whose proof follows automatically from the original one) which is as follows:

\begin{thm}\label{fourthpaper:Erlend}
Let $Y$ be a complex manifold and $Y_0\subset Y$ Stein compact.
Let $\zeta\in Y_0$ be locally exposable and $\gamma\colon[0,1]\to Y$ smoothly embedded curve such that $\gamma(0)=\zeta$ and $\gamma((0,\delta])\subset Y\setminus Y_0$ for some $\delta>0$.
Then, for every $V$ open neighborhood of $\gamma$ and for every $\epsilon>0$, there exist the following:
\begin{enumerate}
	\item $U\subset Y$ neighborhood of $Y_0$\;, 
	\item an arbitrarily small $V'\subset V$ neighborhood of $\zeta$\;, and
	\item $f\colon U\to Y$ holomorphic mapping such that
		\begin{itemize}
		\item $f(\zeta)=\gamma(1)$\;;
		\item $f(V')\subset V$\;;
		\item $\|f-\id\|_{Y_0\setminus V'}<\epsilon$\;.
		\end{itemize}
\end{enumerate}
If the whole curve $\gamma((0,1])$ lies in $Y\setminus Y_0$ then $f$ can be chosen to be injective on $U$.
\end{thm}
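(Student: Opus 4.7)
The plan is to extend the single-point iteration scheme underlying Theorem \ref{fourthpaper:thm3}: instead of pushing one locally exposable point through a dense sequence of targets, I use a dense sequence of distinct locally exposable points and push the $k$-th one to the $k$-th element of a dense sequence in $Y$, so that in the limit the map loses definition precisely on the closure $\Gamma$ of all such exposable points.

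Fix a countable dense sequence $(y_k)_{k\ge 1}$ in $Y$ and a countable dense sequence $(\zeta_k)_{k\ge 1}$ in the set $E$ of locally exposable points of $K$ (so $\overline{E}^Y=\Gamma$). I build inductively injective maps $F_k\in\mathcal O(U_k,Y)$ with $U_k\supset K$, starting from $F_0=\id_Y$ on $U_0=Y$. Given $F_{k-1}$, set $\eta_k:=F_{k-1}(\zeta_k)$. By the inductive control detailed below, $F_{k-1}$ is biholomorphic on a fixed neighborhood of $\zeta_k$, so a local strictly plurisubharmonic defining function near $\zeta_k$ transports through $F_{k-1}^{-1}$ and exhibits $\eta_k$ as a locally exposable point of the Stein compact $F_{k-1}(K)$. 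Pick a smoothly embedded curve $\gamma_k\colon[0,1]\to Y$ with $\gamma_k(0)=\eta_k$, $\gamma_k(1)=y_k$, and $\gamma_k((0,1])\subset Y\setminus F_{k-1}(K)$; apply Theorem \ref{fourthpaper:Erlend} to obtain an injective holomorphic map $f_k$ on a neighborhood $U^*_k$ of $F_{k-1}(K)$ with $f_k(\eta_k)=y_k$ and $\|f_k-\id\|_{F_{k-1}(K)\setminus V'_k}<\epsilon_k$ for an arbitrarily small neighborhood $V'_k$ of $\eta_k$. Set $F_k:=f_k\circ F_{k-1}$ on $U_k:=F_{k-1}^{-1}(U^*_k)$.

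The parameters are arranged so that $\sum\epsilon_k<\infty$ and each $V'_k$ is so small that its pullback $W_k:=F_{k-1}^{-1}(V'_k)\subset K$ sits inside a rapidly shrinking neighborhood of $\zeta_k$, while also avoiding both the previously chosen test points $p_1,\dots,p_{k-1}\in K\setminus\Gamma$ and the fixed neighborhoods of the future points $\zeta_{k+1},\zeta_{k+2},\dots$ reserved at the outset for the biholomorphy argument above; this requires a diagonal prioritization of the two countable enumerations. Immediately after constructing $F_k$, I select a test point $p_k\in K\setminus\Gamma$ close enough to $\zeta_k$ so that $F_k(p_k)$ is within $1/k$ of $y_k$; density of $K\setminus\Gamma$ at $\zeta_k\in\Gamma\subset\partial K$ makes this possible in the intended setting. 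For any compact $L\subset K\setminus\Gamma$ one has $L\cap W_k=\emptyset$ for $k$ large, so $F_k|_L$ is Cauchy and converges uniformly to a continuous $F|_L$; exhausting $K\setminus\Gamma$ by such compacts yields $F_k\to F$ on $K\setminus\Gamma$. The same avoidance property guarantees that $F(p_k)$ lies within $1/k+\sum_{j>k}\epsilon_j$ of $y_k$, so $\overline{F(K\setminus\Gamma)}$ contains the dense sequence $\{y_k\}_{k\ge 1}$ and therefore equals $Y$.

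The main technical obstacle is propagating the locally exposable property through accumulating holomorphic changes: at every step $k$ one must still have $F_{k-1}$ biholomorphic on a definite neighborhood of each future $\zeta_j$ ($j\ge k$), which forces an \emph{a priori} reservation of shrinking neighborhoods around all of the $\zeta_j$ and a correspondingly smaller choice of every $V'_\ell$ with $\ell<j$. A secondary subtlety is ensuring the existence of test points $p_k\in K\setminus\Gamma$ arbitrarily close to each $\zeta_k$; this requires $\Gamma$ to have empty interior in $K$, which holds since $\Gamma\subset\partial K$ under the regularity tacitly assumed for $K$.
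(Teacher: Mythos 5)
Your proposal does not address the statement it is supposed to prove. The statement in question is Theorem \ref{fourthpaper:Erlend}, the exposing--points result: given a locally exposable point $\zeta$ of a Stein compact $Y_0$ and an embedded arc $\gamma$ leaving $Y_0$, one must \emph{construct} a holomorphic map $f$ on a neighborhood of $Y_0$ with $f(\zeta)=\gamma(1)$, $f(V')\subset V$, and $\|f-\id\|_{Y_0\setminus V'}<\epsilon$. What you have written is instead an argument for Theorem \ref{fourthpaper:thm4} (the accumulation result on $K\setminus\Gamma$), and --- decisively --- your induction \emph{invokes} Theorem \ref{fourthpaper:Erlend} at every step to produce the maps $f_k$. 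Nothing in your text constructs the exposing map itself, so as a proof of the stated theorem the argument is vacuous and circular: you assume exactly what you were asked to establish.

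For the record, the paper does not prove this theorem either; it is quoted as a mild variant of Theorem 1.1 of Deng--Forn\ae ss--Wold \cite{fourthpaper:DFW18}, the only modifications being that the arc is required to leave $Y_0$ only on an initial segment $\gamma((0,\delta])$ (at the price of losing injectivity of $f$ unless all of $\gamma((0,1])$ avoids $Y_0$). A genuine proof would have to reproduce the mechanism of that paper: use the strictly plurisubharmonic local defining function at $\zeta$ to build a local biholomorphism pushing a small neighborhood of $\zeta$ out of $Y_0$ along the initial piece of $\gamma$, then continue the point along the arc by approximation on the polynomially convex (in local terms, Runge) union of $Y_0$ and the curve, and glue the local and global pieces while controlling the sup norm off $V'$. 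None of these ingredients appear in your proposal. If your goal was Theorem \ref{fourthpaper:thm4}, the iteration you sketch is broadly in the spirit of the paper's Sections 3.1 and 3.4 (though the paper generates its exposable points $\zeta_k$ dynamically from the level sets $\partial Y_{c_k}$ of a Morse exhaustion rather than fixing an a priori dense sequence in $E$, which sidesteps your delicate ``reservation of neighborhoods'' bookkeeping), but that is a different statement from the one under review.
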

In \cite{fourthpaper:DFW18} the theorem is formulated in a slightly more general setting and considering only the case $\gamma((0,1])\subset Y\setminus Y_0$. 

\begin{lem}\label{fourthpaper:tech}
Let $X$ be a metrizable topological space, $W\subset\subset X$ connected not finite. Let $f_k\colon U_k\to X$ be a sequence of continuous mappings, where $U_k\subset X$ is some open neighborhood of $K_k$ and $K_1:=\overline W$, $K_{k+1}:=f_k(K_k)$.
Define $F_k:=f_k\circ\cdots\circ f_1\colon\overline W\to X$ and consider
\begin{itemize}
\item $\{\epsilon_k\}_k$ positive real numbers\;,
\item $\{C_k\}_k$ \emph{compact exhaustion} of $W$, that is $C_k\subset X$ compact, $C_k\subsetneq C_{k+1}$ and $\bigcup_k C_k=W$, and
\item $\{V_{k}'\}_k$ open sets in $X$\;,
\end{itemize}
such that, setting $r_{k}:=\max\{d(x,F_{k}(C_{k}))\;:\;x\in K_{k+1}\},$ the following hold:
\begin{enumerate}[label=(\roman*)$_k$]
\item $V_k'\subset U_k$ and $V_k'\cap K_k\neq\emptyset$\;,
\item \label{fourthpaper:max} $\max\{r_k,\;\|f_k-\id\|_{K_k\setminus V_k'}\}\le\epsilon_k$,\;and
\item \label{fourthpaper:cap} $V_{k+1}'\cap F_{k}(C_{k})=\emptyset$\;.
\end{enumerate}
If $F_k$ converges uniformly on compacts of $W$ to $F\colon W\to X$, then for every $x\in K_{k+1}$
\begin{equation}\label{fourthpaper:dense}
d(x,F(W))\le\sum_{n\ge k}\epsilon_{n}\;.
\end{equation}
\end{lem}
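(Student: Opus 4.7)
The plan is: fix an arbitrary $x \in K_{k+1}$ and construct a single point $z_k \in C_k \subset W$ such that $d(x, F(z_k)) \le \sum_{n \ge k} \epsilon_n$. Since $F(z_k) \in F(W)$, this immediately yields \eqref{fourthpaper:dense}. The construction splits into pinning down $z_k$ via $r_k \le \epsilon_k$, and then controlling how far the forward orbit $\bigl(F_n(z_k)\bigr)_{n \ge k}$ can drift, by arranging for it to live in the region where each $f_n$ is $\epsilon_n$-close to the identity.

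\emph{Step 1 (initial approximation).} Since $F_k$ is continuous and $C_k$ is compact, the set $F_k(C_k)$ is compact and the distance of $x$ to it is attained. Combined with $r_k \le \epsilon_k$ from (ii)$_k$, this gives $z_k \in C_k$ with $d(x, F_k(z_k)) \le \epsilon_k$.

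\emph{Step 2 (telescoping along the orbit).} I claim that for every $n \ge k+1$ one has $F_{n-1}(z_k) \in K_n \setminus V_n'$, and consequently
\begin{equation*}
d\bigl(F_n(z_k), F_{n-1}(z_k)\bigr) \;=\; d\bigl(f_n(F_{n-1}(z_k)), F_{n-1}(z_k)\bigr) \;\le\; \epsilon_n.
\end{equation*}
The containment $F_{n-1}(z_k) \in K_n$ is immediate from $F_{n-1}(\overline W) = K_n$. For the complement, since $k \le n-1$ and the exhaustion is nested, $z_k \in C_k \subseteq C_{n-1}$, so $F_{n-1}(z_k) \in F_{n-1}(C_{n-1})$, and hypothesis (iii)$_n$ says precisely that this set is disjoint from $V_n'$. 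Thus (ii)$_n$ applies at the point $F_{n-1}(z_k)$ and yields the displayed estimate.

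\emph{Step 3 (passing to the limit).} Telescoping the estimates of Step 2 gives
\begin{equation*}
d\bigl(F_N(z_k), F_k(z_k)\bigr) \;\le\; \sum_{n=k+1}^{N} \epsilon_n \qquad \text{for every } N \ge k.
\end{equation*}
Because $z_k \in C_k \subset\subset W$ and $F_N \to F$ uniformly on compact subsets of $W$, letting $N \to \infty$ gives $d(F(z_k), F_k(z_k)) \le \sum_{n \ge k+1} \epsilon_n$. Combining with Step 1 via the triangle inequality,
\begin{equation*}
d\bigl(x, F(z_k)\bigr) \;\le\; d\bigl(x, F_k(z_k)\bigr) + d\bigl(F_k(z_k), F(z_k)\bigr) \;\le\; \sum_{n \ge k} \epsilon_n,
\end{equation*}
which proves \eqref{fourthpaper:dense}.

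\emph{Where the difficulty lies.} The whole argument stands or falls on Step 2: one must guarantee that every iterate $F_{n-1}(z_k)$ of the chosen starting point actually lies in the ``stable region'' $K_n \setminus V_n'$ where (ii)$_n$ controls $\|f_n - \id\|$ by $\epsilon_n$. This is exactly why condition (iii)$_n$ is imposed, and why the exhaustion $\{C_j\}$ is required to be nested; without the inclusion $C_k \subseteq C_{n-1}$ one could not propagate the initial containment $z_k \in C_k$ forward, and without (iii)$_n$ the orbit could enter $V_n'$, where $f_n$ is allowed to move points arbitrarily far.
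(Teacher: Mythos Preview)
Your proof is correct and follows essentially the same approach as the paper: use $r_k\le\epsilon_k$ for the initial approximation, invoke the nestedness of the $C_j$'s together with condition (iii) to keep the iterates outside the $V_n'$, apply (ii) to bound each step, telescope, and pass to the limit; the only difference is that you track a single point $z_k\in C_k$ while the paper tracks the whole set $F_k(C_k)$. One small indexing slip: the hypothesis you invoke in Step~2 to get $F_{n-1}(C_{n-1})\cap V_n'=\emptyset$ is (iii)$_{n-1}$, not (iii)$_n$.
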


\begin{proof}
Let $x\in K_{k+1}$. Then by definition and from \ref{fourthpaper:max} one gets
$$
d(x,F_k(C_k))\le r_{k}\le\epsilon_{k}\;.
$$
\newline
Then \ref{fourthpaper:cap} implies $F_k(C_k)\subset K_{k+1}\setminus V_{k+1}'$ hence \ref{fourthpaper:max}$_{+1}$ says that $f_{k+1}$ moves $F_k(C_k)$ less than $\epsilon_{k+1}$, therefore 
$$
d(x,f_{k+1}(F_k(C_k)))\le\epsilon_k+\epsilon_{k+1}\;.
$$
Since $f_{k+1}(F_k(C_k))=F_{k+1}(C_k)\subset F_{k+1}(C_{k+1})\subset K_{k+2}\setminus V_{k+2}'$ we can repeat the argument getting
$$
d(x,\underbrace{f_{k+2}f_{k+1}(F_k(C_k))}_{F_{k+2}(C_k)})\le\epsilon_k+\epsilon_{k+1}+\epsilon_{k+2}\;.
$$
Inductively, for every $m\ge0$ we get
$$
d(x,F_{k+m}(C_k))\le\sum_{j=0}^m\epsilon_{k+j}
$$
\newline
and passing to $\lim_{m\to\infty}$ (which is well defined in left hand side, since the distance is continuous and $\{F_n\}_n$ uniformly converges to $F$ on $C_k$) we get $d(x,F(C_k))\le\sum_{j\ge0}\epsilon_{k+j}$.
Since $F(C_k)\subset F(W)$, we have $d(x,F(W))\le d(x,F(C_k))$.
\end{proof}
Recall now a useful property of Stein manifolds (\cite{fourthpaper:Ran86}, \S 2, Proposition 2.21 and Theorem 3.24):
\begin{thm}\label{fourthpaper:morse}
Let $Y$ be a Stein manifold. Then there exists $\rho$, a $\mathscr C^2$--smooth strictly plurisubharmonic exhausting function for $Y$, such that the set of critical points $C:=\{z\in Y\;:\;d\rho(z)=0\}$ is discrete in $Y$.
In particular, for every $c\in\R$, $\{\rho<c\}\subset\subset Y$ and $Y_c:=\{\rho\le c\}$ is a Stein compact. 
\end{thm}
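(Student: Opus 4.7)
The plan is to produce $\rho$ by a $\mathscr{C}^2$-small perturbation of a smooth strictly plurisubharmonic exhaustion of $Y$. Existence of such a $\rho_0\in\mathscr{C}^\infty(Y)$ is built into the definition (equivalently, into the Grauert characterization) of a Stein manifold. Since strict plurisubharmonicity is an open condition in the $\mathscr{C}^2$-topology on compact sets, and the exhaustion property is stable under uniformly small perturbations, it is enough to perturb $\rho_0$ so that its critical set becomes discrete.

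I would fix a compact exhaustion $Y=\bigcup_{k\ge1}L_k$ with $L_k\subset\subset L_{k+1}^{\circ}$ and construct inductively $h_k\in\mathscr{C}^2(Y)$ supported in the shell $L_{k+1}\setminus L_{k-1}^{\circ}$ (with $L_0:=\emptyset$), subject to three smallness/genericity requirements. Setting $\rho_k:=\rho_0+\sum_{j=1}^k h_j$, I would ask that (i) $\|h_k\|_{\mathscr{C}^0(Y)}\le 2^{-k}$, so $\sum h_j$ converges uniformly and the limit $\rho$ remains an exhaustion; (ii) $\|h_k\|_{\mathscr{C}^2(L_{k+1})}$ lies below the open-condition threshold preserving strict plurisubharmonicity of $\rho_k$ on $L_{k+1}$; and (iii) $\rho_k$ is Morse on $L_k$, i.e.\ all critical points there are nondegenerate, hence isolated.

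The existence of $h_k$ achieving (iii) is a standard Sard-type density statement: in a finite atlas of holomorphic charts covering $L_{k+1}\setminus L_{k-1}^{\circ}$, a generic real-affine perturbation of $\rho_{k-1}$ (patched by a subordinate partition of unity) makes $d\rho_k$ transverse to the zero section on $L_{k+1}$. Because $\rho_{k-1}$ is already Morse on $L_{k-1}$, its finitely many nondegenerate critical points there are isolated, and supporting $h_k$ away from $L_{k-1}$ keeps them unchanged. After scaling, (i) and (ii) can be arranged simultaneously without destroying (iii). Passing to $\rho:=\lim_k\rho_k\in\mathscr{C}^2(Y)$: strict plurisubharmonicity is preserved on every $L_{k+1}$ by (ii), hence on all of $Y$; the exhaustion property survives by (i); and each critical point of $\rho$ lying in $L_k$ is a nondegenerate critical point of $\rho_k$, so the critical set is discrete. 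The \emph{in particular} clause then follows immediately: $\{\rho<c\}\subset\subset Y$ by the exhaustion property, and the sublevel sets $\{\rho<c'\}$ with $c'\downarrow c$ form a fundamental system of strictly pseudoconvex (hence Stein) neighborhoods of $Y_c$, which makes it a Stein compact.

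The main obstacle is the simultaneous coordination of the three scales in (i)--(iii): the Sard perturbation is naturally $\mathscr{C}^2$-small on a compact shell, the exhaustion property needs a global $\mathscr{C}^0$-summable bound, and preserving the Morse data established at previous steps forces $h_k$ to vanish on $L_{k-1}$. Organizing these three constraints through the shell supports is the only delicate bookkeeping in the proof; once it is set up, each of the three conditions is essentially a standard fact about Stein manifolds, open conditions in jet spaces, and transversality.
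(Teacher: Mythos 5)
The paper offers no proof of this statement at all: it is quoted as a known fact with a pointer to Range (\S 2, Proposition~2.21 and Theorem~3.24), so there is nothing internal to compare your argument against. On its own merits, your proof is the standard one and is essentially correct: start from a smooth strictly plurisubharmonic exhaustion (which exists since $Y$ is Stein), and make it Morse by a locally finite sequence of shell-supported perturbations, using that strict plurisubharmonicity is $\mathscr C^2$-open on compacts and that the $\mathscr C^0$-summability keeps the exhaustion property; the \emph{in particular} clause follows since $\{\rho<c'\}$ for $c'>c$ carries the strictly plurisubharmonic exhaustion $\rho-\log(c'-\rho)$ and is therefore Stein, giving $Y_c$ a Stein neighborhood basis. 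Two points deserve tightening. First, the transversality step as you phrase it --- ``a generic real-affine perturbation \dots patched by a subordinate partition of unity makes $d\rho_k$ transverse to the zero section'' --- is not literally a single application of Sard: where the cutoffs are non-constant the perturbation of $d\rho_{k-1}$ is not the constant covector $d\ell$, so one must instead proceed chart by chart, achieving nondegeneracy on a compact piece where the cutoff is identically $1$ and invoking the $\mathscr C^2$-openness of the Morse condition to preserve what was gained on the previously treated compacta. Second, with supports in $L_{k+1}\setminus L_{k-1}^{\circ}$ the limit $\rho$ agrees with $\rho_k$ only on $L_k^{\circ}$ (the term $h_{k+1}$ may be nonzero on $\partial L_k$), so the final discreteness claim should be run over the open exhaustion $Y=\bigcup_k L_k^{\circ}$ rather than over the $L_k$ themselves. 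Both are routine repairs; the approach is sound.
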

\begin{rk}\label{fourthpaper:boundaryptslocexp}
With the notation of Theorem \ref{fourthpaper:morse}, we see that every regular boundary point of a strictly pseudoconvex domain $\{\rho<c\}$ is locally exposable: take $\zeta\in\{\rho=c\}\setminus C$; we assume it is the origin in suitable local coordinates. Then considering $\tilde\rho(z):=\rho(z)-c-\epsilon|z|^2$ for $\epsilon$ small enough, we conclude.
\end{rk}


\section{Inductive procedure}
Consider $Q=\{q_n\}_n\subset Y$ such that $\overline Q=Y$.
Fix $\epsilon>0$ and define $\epsilon_k:=\frac\epsilon{2^{k+1}}$.
For Theorems \ref{fourthpaper:thm1} and \ref{fourthpaper:thm2} we fix a compact subset $M$ of the domain $X$.
In what follows, we exploit Theorem \ref{fourthpaper:Erlend} to get a suitable sequence of holomorphic functions $f_k$ allowing to reach all the points of $Q$ which have not been already reached, so that the image of the composition tends to invade the whole codomain. Along with the $f_k$, we construct both a compact exhaustion $\{C_k\}_k$ of the case domain $X$, $K\setminus\{x_0\}$ or $K\setminus\Gamma$ so that it fulfills the hypothesis of Lemma \ref{fourthpaper:tech}, and all the other sequences needed.
For the construction of the $f_k$, we will focus on Theorems \ref{fourthpaper:thm2} and \ref{fourthpaper:thm1}; the procedure for the remaining two theorems is alike and it is afterwards explained.
Similarly for the proofs: we worked out the details of the proof of Theorem \ref{fourthpaper:thm2}, which is displayed by proving convergence of the composition of the $f_k$, approximation of the given function, and density of the image. The argument for the remaining three results is analogous and it is subsequently illustrated.
\subsection{Construction of $f_k$ for Theorem \ref{fourthpaper:thm2}}\label{fourthpaper:indthm2}
\subsubsection{Existence of locally exposable points and base of the induction}\label{fourthpaper:elep2}
Consider $g\colon X\to Y$ holomorphic non--constant such that $g(X)\subset\subset Y$. Set $K_1:=\overline{g(X)}$ and take $c_1\in\R$ such that $K_1\subseteq Y_{c_1}$ and $K_1\cap\partial Y_{c_1}\neq\emptyset$ (see notation of Theorem \ref{fourthpaper:morse}) . Consider $\zeta_1\in K_1\cap\partial Y_{c_1}$; if $\zeta_1\notin C$ then it is locally exposable by Remark \ref{fourthpaper:boundaryptslocexp}, otherwise we slightly move $K_1$ by composing $g$ with a suitable holomorphic small perturbation defined as follows.
Assume $\zeta_1$ is the only point in $K_1\cap\partial Y_{c_1}$. Observe that in suitable local coordinates on $\C^n$, which we split as $z=(z',z'')=x+iy=(x'+iy',x''+iy'')\in\C^k\oplus\C^{n-k}$, we can express $\rho$ near the origin (\cite{fourthpaper:F17}, Lemma 3.10.1) as
$$
\rho(z)={\rho(0)}+E(x',x'',y',y'')+o(|z|^2)
$$
where
$$
E(x',x'',y',y''):=\sum_{j=1}^k(\lambda_jy_j^2-x_j^2)+\sum_{j=k+1}^n(\lambda_jy_j^2+x_j^2)
$$
with $\lambda_j>1$ for $j=1,\dots,k$ and $\lambda_j\ge1$ for $j=k+1,\dots,n$, for some $k\in\{0,1,\dots,n\}$.
Assume $\zeta_1$ to be the origin $0\in\C^n$ in these coordinates, so $\rho(0)=\rho(\zeta_1)=c_1$.
The boundary of any $Y_c$ is defined by $\rho$, so we want to move a small neighborhood of $\zeta_1$ in a suitable direction allowing it to go across the level set $\partial Y_{c_1}$.
Take then any nonzero vector $v=(\xi'+i\eta',\xi''+i\eta'')$ with $\xi'=0\in\R^k$.
By standard results on Stein Manifolds (\cite{fourthpaper:Hor90}, Corollary 5.6.3), given $w\in T_{\zeta_1}Y$ there exists a holomorphic vector field $V\colon Y\to TY$ such that $V(\zeta_1)=w$. In our case we take $w$ to correspond to $v$. The flow of $V$ on some neighborhood $W$ of $K_1$ is a holomorphic mapping $\phi_t\colon W\to Y$ defined for complex times sufficiently small in modulus, say $|t|<T\;,t\in\C$. In local coordinates around $\zeta_1$ it is $\phi_t(0)=tv+o(|t|^2)$ and up to shrinking $T$, we have that
$$
\rho(\phi_t(0))=\rho(0)+t^2E(0,\xi'',\eta',\eta'')+o(|t|^3)>\rho(0)=c_1
$$
for any $|t|<T$, considering now $t\in\R$.
By continuity of $(z,t)\mapsto\phi_t(z)$ and of $\rho$, the flow will drag a whole small neighborhood of $\zeta_1$ beyond $\partial Y_{c_1}$ and for sufficiently small times it will be the only piece of $K_1$ crossing the boundary (as we are assuming that $K_1\cap\partial Y_{c_1}$ is just one point), that is: there exist $T'<T$ and $U$ neighborhood of $\zeta_1$ sufficiently small, so that
\begin{itemize}
\item $\rho(\phi_t(z))>\rho(0),\;\;\forall z\in U\cap K_1,\;\;T'/2<t<T'$\;;
\item $\rho(\phi_t(z))\le\rho(0),\;\;\forall z\in K_1\setminus U,\;\;\;0\le t<T'$\;;
\item $\phi_t(U\cap K_1)\cap C=\emptyset,\;\;\forall\; T'/2<t<T'$.
\end{itemize}
So, considering now $\widetilde K_1:=\phi_t(K_1)$ for any $T'/2<t<T'$, there exists some $c>c_1$ such that $\widetilde K_1\subseteq Y_c$ and $\widetilde K_1\cap\partial Y_c\setminus C\neq\emptyset$.

If $K_1\cap\partial Y_{c_1}$ contains more than one point, either it is not discrete or it is a finite set. In the former case $K_1\cap\partial Y_{c_1}\setminus C\neq \emptyset$, in the latter we may assume $K_1\cap\partial Y_{c_1}=\{\zeta_1,\zeta_1'\}$; then, applying the previous argument to one of these points, the piece of $K_1$ that is dragged across $\partial Y_{c_1}$, could be not only $U\cap K_1$, but also $U'\cap K_1$, for any small time (where $U,U'\subset Y$ are suitably small neighborhoods of $\zeta_1$ and $\zeta_1'$ respectively), leading to a similar $\widetilde K_1$ and achieving the same conclusion. Therefore, up to consider $\phi_t\circ g$ instead of $g$, we may assume that $\exists \zeta_1\in K_1\cap\partial Y_{c_1}\setminus C$, which is then locally exposable by Remark \ref{fourthpaper:boundaryptslocexp} and could be sent to any point of $Y\setminus K_1$ by the holomorphic mapping $f_1$ provided by Theorem \ref{fourthpaper:Erlend} (see previous section).

Finally define $f_0:=\id_Y$, $C_0:=\emptyset$, $F_0:=f_0$ and $n_0:=0$.

\subsubsection{Inductive step}\label{fourthpaper:indpr2}
Assume we have the following: $K_k\subset Y$ compact, $c_k\in\R$ such that $K_k\subseteq Y_{c_k}$ and $\exists\zeta_k\in K_k\cap\partial Y_{c_k}\setminus C$, $F_{k-1}$ holomorphic on some neighborhood of $K_1$, with $F_{k-1}(K_1)=K_k$, $C_{k-1}\subset X$ compact and $n_{k-1}\in \N$.
Consider then a smoothly embedded curve $\gamma_k\colon[0,1]\to Y$ such that
\begin{enumerate}[label=(\roman*)$_k$]
\item $\gamma_k(0)=\zeta_k$\;;
\item $\gamma_k(1)=q_{n_k}$ where $n_k:=\min\{n>n_{k-1}\;:\;q_n\notin K_k\}$\;;
\item $\gamma_k((0,\delta_k])\subset Y\setminus Y_{c_k}$ for some $\delta_k>0$\;.
\end{enumerate}
Then for every $V_k$ open neighborhood of $\gamma_k$, Theorem \ref{fourthpaper:Erlend} guarantees there exist
\begin{enumerate}[label=(\arabic*)$_k$]
\item
 $U_k\subset Y$ neighborhood of $Y_{c_k}$\;;
\item
$V_{k}'\subset (V_{k}\cap B(\zeta_{k},\epsilon_{k}))\setminus F_{k-1}(g(M\cup C_{k-1}))$\;;\label{fourthpaper:ind}
\item 
$f_k\colon U_k\to Y$ holomorphic such that the following hold:
\begin{enumerate}[label=(\alph*)$_k$]
\item
$f_k(\zeta_k)=q_{n_k}$\;;
\item $f_k(V_k')\subset V_k$\;;
\item\label{fourthpaper:fprop} $\|f_k-\id\|_{Y_{c_k}\setminus V_k'}<\epsilon_k$\;.
\end{enumerate}
\end{enumerate}
Observe that to apply Theorem \ref{fourthpaper:Erlend}, $V_k'$ needs to be a neighborhood of the locally exposable point $\zeta_k$; moreover to have convergence (see next section) it has to avoid the image of (the fixed compact $M$ and) the compact $C_{k-1}$. This cannot happen if $g$ is constant, as $F_{k-1}(g(M\cup C_{k-1}))=K_k=\{\zeta_k\}$ so $V_k'$ would be a punctured neighborhood of $\zeta_k$; moreover $\zeta_k\in Y_{c_k}\setminus V_k'$ and \ref{fourthpaper:fprop} would fail for $\epsilon_{k}$ small enough.
Set $K_{k+1}:=f_k(K_k)$; up to perturbing $f_k$ as we did with $g$, there exists $c_{k+1}\in\R$ such that $K_{k+1}\subseteq Y_{c_{k+1}}$ and $\exists\zeta_{k+1}\in K_{k+1}\cap\partial Y_{c_{k+1}}\setminus C$; then  $F_k:=f_k\circ F_{k-1}$ is holomorphic on some neighborhood of $K_1$ and $F_k(K_1)=K_{k+1}$; we finally choose a compact $C_k\subset X$ large so that $\max\{d(x,F_k(g(C_k))):x\in K_{k+1}\}\le\epsilon_k$ and $C_{k-1}\subsetneq C_k$. The induction may proceed.
\subsection{Construction of $f_k$ for Theorem \ref{fourthpaper:thm1}}\label{fourthpaper:indthm1}
\subsubsection{Existence of a locally exposable point}\label{fourthpaper:elep1}
Let $g\in \mathcal O_{\operatorname{e}}(X,Y)$.
Exploiting sharshapedness and up to considering $g_{\delta}(z)=g((1-\delta)z)$ for $0<\delta<1$, we can suppose without loss of generality $g$ to be holomorphic and injective on $U$ a Stein neighborhood of $\overline X$.
Call $R=\max_{z\in\overline X}|z|$, let $\zeta_0\in\partial X$ be such that $|\zeta_0|=R$ and define $\rho(z):=|z|^2-R^2$ which is strictly plurisubharmonic, hence $\zeta_0$ is a locally exposable point by Remark \ref{fourthpaper:boundaryptslocexp}.
Define $\zeta_1:=g(\zeta_0)$ and $K_1:=g(\overline{X})$.

If $\dim Y=n$, then $\zeta_1$ is locally exposable with respect to $\rho_1:=\rho\circ g^{-1}$ and $K_1$ is a Stein compact, in fact $W_{\alpha}=g_\alpha(X),\;\;0<\alpha<\delta$ is a neighbourhood basis of Stein domains since each $W_\alpha$ is biholomorphic to $X$ which is holomorphically convex. In this case, $\zeta_1$ is locally exposable and $K_1$ is a Stein compact asking $g$ for mere injectivity.

Let us now prove that $\zeta_1$ and $K_1$ are still a locally exposable point and a Stein compact respectively even in the case $\dim Y=m>n$.
Working with local charts we can assume to work on open subsets of $\C^m$.
Since $dg(\zeta)$ has full rank $n$ at each point $\zeta\in U$, it is $\imm dg(\zeta)\simeq \C^n$ and up to a linear change of coordinates, we can assume $\imm dg(\zeta_0)=\C^n\times\{0\}^{m-n}$ and obviously $\imm dg(\zeta_0)\bigoplus\spann_{\C}(e_{n+1},\dots,e_m)=\C^m$. Define then, for $(z_1,\dots,z_m)\in U\times\C^{m-n}$
$$
\tilde g(z_1,\dots,z_m):=g(z_1,\dots,z_n)+z_{n+1}e_{n+1}+\cdots+z_{m}e_{m}\;.
$$
Clearly $\tilde g(\zeta_0,0)=\zeta_1$ and it is locally invertible near $(\zeta_0,0)$. Call $h\colon A\to B$ the inverse, where $A,B\subset\C^m$ are open neighborhoods of $\zeta_1$ and $(\zeta_0,0)$ respectively and since
$$
\pi_j\circ h(z)=z_j\;\;\mbox{for}\;\;j=n+1,\dots,m\;,
$$
we have that
$$
g(U)\cap A=
\{z\in A\;:\;z_{n+1}=\cdots=z_m=0\}\;;
$$
we worked around $\zeta_1$, but the same argument holds for any other point (regardless of whether it is regular or singular) so $g(U)$ is a complex subvariety; in particular, it is \emph{locally closed} (every point in $g(U)$ has an open neighborhood $W$ such that $g(U)\cap W$ is closed in $W$), thus it admits a Stein neighborhood basis (\cite{fourthpaper:F17}, Theorem 3.1.1). The same holds for any $g(U')$, with $\overline{X}\subset U'\subset U$, therefore $K_1$ is a Stein compact. Define now
$$
\rho_1(z):=\rho\circ \alpha(z)+\sum_{j=n+1}^m|z_j|^2\;,
$$
where $\alpha:=\pi_{|\C^n}\circ h\colon A\to U$ and $\pi_{|\C^n}\colon\C^m\to\C^n$ is the projection on the first $n$ coordinates. The term $\rho\circ \alpha$ is plurisubharmonic as $\rho$ is such and $\alpha$ is holomorphic; moreover
$$
L_w(\rho\circ\alpha;t)=L_{\alpha(w)}(\rho;\alpha'(w)t)>0
$$
for every $t\in\C^m,\;\pi_{|\C^n}(t)\neq0,\;w\in A$,
in fact $\ker \alpha'(w)=\{0\}^n\times\C^{m-n}$ for any $w\in A$ and $\rho$ is strictly plurisubharmonic. Therefore we add the plurisubharmonic term $\beta(z):=\sum_{j=n+1}^m|z_j|^2$. Clearly
$$
L_w(\beta;t)=\beta(t)>0
$$
for every $t\in\C^{m},\;\pi_{|\C^{m-n}}(t)\neq0,\;w\in A$. Therefore $\rho_1$ is strictly plurisubharmonic on $\zeta_1$ (actually on the whole $A$). It is clear that $\rho_1(\zeta_1)=0$ and $\rho_1<0$ on $A\cap K_1\setminus\{\zeta_1\}$; then a computation shows that $d\rho_1(\zeta_1)\neq0$, hence $\zeta_1$ is locally exposable in $K_1$ with respect to $\rho_1$. 

\subsubsection{Inductive procedure}\label{fourthpaper:indpr1}
Let us observe that $Y\setminus K_1$ is connected, in fact if $n=1$, $X$ is simply connected (being starshaped) thus its boundary is connected and so is its image.

Let $n\ge2$ and $\dim Y=n$; then $g(X)$ is a Stein domain, which has connected boundary for $\dim Y\ge2$ (see \cite{fourthpaper:Ser55}, pag. 22).

The remaining case is $n\ge2$ and $m=\dim Y>n$. In this case just observe that $g(X)$ is a complex submanifold of complex codimension $m-n\ge1$, so its complement is connected. 

Assume we have $K_k\subset Y$ Stein compact, $\zeta_k\in K_k$ locally exposable with respect to some strictly plurisubharmonic $\rho_k$, $C_{k-1}\subset X$ compact, $F_{k-1}$ holomorphic injective on some neighborhood of $K_1$ such that $F_{k-1}(K_1)=K_{k}$. The construction of $f_k\colon U_k\to Y$ is as in Section \ref{fourthpaper:indpr2}, with $Y_{c_k}=K_k$ (Stein compact) and $\gamma_k((0,1])\subset Y\setminus Y_{c_k}$ (which can be achieved since $Y\setminus Y_{c_k}$ is connected, as above), allowing $f_k$ to be injective on $U_k$ and setting $\zeta_{k+1}:=f_k(\zeta_k)=q_{n_k}$ (which is locally exposable with respect to $\rho_{k+1}:=\rho_k\circ f_k^{-1}$).
Finally set $K_{k+1}:=f_k(K_k)$ Stein compact, $F_k:=f_k\circ F_{k-1}$ holomorphic injective on some open neighborhood of $K_1$ and $C_k\subset X$ as in Section \ref{fourthpaper:indpr2}. In particular $F_k(K_1)=K_{k+1}$ and $\{C_k\}_k$ is a compact exhaustion of $X$. 

\subsection{Construction of $f_k$ for Theorem \ref{fourthpaper:thm3}}
Since $K$ is a Stein compact, there exists $U\subset Y$ Stein neighborhood of $K$ and consequently a plurisubharmonic exhausting function $\rho\colon U\to\R$ as recalled in Theorem \ref{fourthpaper:morse}.
Then, the existence of at least one locally exposable point $x_0\in K$ is guaranteed by the argument of Section \ref{fourthpaper:elep2}, with $x_0, K, U$ playing the role of $\zeta_1, K_1$ and $Y$ respectively and being $\{C_k\}_k$ compact exhaustion for $K\setminus\{x_0\}$.
So, given any $x_0\in K$ locally exposable, the rest of the inductive procedure is as in Section \ref{fourthpaper:indpr1}, except for $M$ and $g$ which here just do not play any role, with $x_0, K, \rho$ playing the role of $\zeta_1, K_1$ and $\rho_1$ respectively.

\subsection{Construction of $f_k$ for Theorem \ref{fourthpaper:thm4}}
The construction is as in Section \ref{fourthpaper:indthm2}, with no $M$, no $g$, with $K$ playing the role of $K_1$ and $\{C_k\}_k$ compact exhaustion for $K\setminus\Gamma$.
At each step we get a locally exposable point $\zeta_{k}\in K_k\cap\partial Y_{c_k}\setminus C$, sent to $q_{n_k}$ by $f_k$ and corresponding to some $x_k\in K=K_1$, which is locally exposable as well (thus $\{x_k\}_k\subset\Gamma$).

\section{Proofs}

\subsection{Proof of Theorem \ref{fourthpaper:thm2}}
\begin{proof}
$F_k\colon K_1= \overline{g(X)}\to Y$ is holomorphic. Then from \ref{fourthpaper:ind}$_{+1}$ it follows that for every fixed $j$
\begin{align}\label{fourthpaper:claim}
F_k(g(C_j))\subset K_{k+1}\setminus V_{k+1}'
\end{align}
holds true for every $k\ge j$. 
Hence we get that, for every fixed $j$
\begin{equation}\label{fourthpaper:ineq}
\|F_{k+1}-F_k\|_{g(C_j)}=\|f_{k+1}-\id\|_{F_k(g(C_j))}\le\|f_{k+1}-\id\|_{K_{k+1}\setminus V_{k+1}'}<\epsilon_k\;
\end{equation}
is true for every $k\ge j$, so $\{F_k\}_k$ converges on compact subsets of $g(X)$ to $F\colon g(X)\to Y$ holomorphic. 
As above, \ref{fourthpaper:ind}$_{+1}$ implies $F_k(g(M))\subset K_{k+1}\setminus V_{k+1}' $ for every $k$, hence inequality $\eqref{fourthpaper:ineq}$ holds true for all $k$, thus
\begin{align*}
\|F-\id\|_{g(M)}
&\le\sum_{k\ge0}\|F_{k+1}-F_k\|_{g(M)}<\epsilon\;,
\end{align*}
allowing us to conclude that $\|h-g\|_M<\epsilon$, where $h=F\circ g\colon X\to Y$ is the approximating mapping.
We now check it actually has dense image.
If $h(X)$ is not dense in $Y$, then there exists an open ball $B\subset Y$ such that
$$
\beta:=d(B,h(X))>0\;.
$$
The construction of the sets $K_k$ and the sequence $\{n_k\}_{k\ge1}$ allows to consider a partition of $Q$ as
\begin{align*}
&q_{n_{k-1}},\dots,q_{n_{k}-1}\in{K_{k}},\;\;k\ge1\;.
\end{align*}
In this way we can define the sequence
$$
k(n):=j\;\;\mbox{for}\;\;n=n_{j-1},\dots,n_j-1,\;\;\mbox{for}\;\;j\ge1\;
$$
and we have
$q_n\in K_{k(n)}$ holds true for all $n$;
 the sequence $n\mapsto k(n)$ is increasing and such that $k(n)\to+\infty$ as $n\to+\infty$ (otherwise $\exists\widetilde k,N$ such that $q_n\in K_{\widetilde k}$ for all $n\ge N$, so $Q$ would not be dense). Since $g(X)\subset\subset Y$, it follows by Lemma \ref{fourthpaper:tech} that 
$$
d(q_n,h(X))= d(q_n,F(g(X)))\le\sum_{j\ge k(n)-1}\epsilon_{j}\;.
$$
This last sum is less than $\beta$ for any $n\ge n_{\beta}$, for a suitably large $n_\beta$. Therefore $\{q_n\}_{n>n_{\beta}}$, which is still dense in $Y$, does not meet an open ball, contradiction.
\end{proof}

\subsection{Proof of Theorem \ref{fourthpaper:thm1}}
\begin{proof} It is the same as the previous proof. Just observe that now $F_k$ is defined on $K_1=g(\overline X)$, it is holomorphic injective and so is $F$. Since $g$ is injective by assumption, then the approximating mapping $h=F\circ g$ is holomorphic injective as well.
\end{proof}

\subsection{Proof of Theorem \ref{fourthpaper:thm3}}
\begin{proof}
The mappings $F_k$ are defined, holomorphic and injective on some open neighborhood of $K$ and converge to $F\colon K\setminus\{x_0\}\to Y$ uniformly on compacts of $K\setminus\{x_0\}$. The construction of mappings $f_k$ ensures, as for Theorem \ref{fourthpaper:thm2}, to achieve $\overline {F(K\setminus\{x_0\})}=Y$.
\end{proof}

\subsection{Proof of Theorem \ref{fourthpaper:thm4}}
\begin{proof}
As for Theorem \ref{fourthpaper:thm3} (except for injectivity of $F_k$), with $K\setminus\Gamma$ instead of $K\setminus\{x_0\}$.
\end{proof}

 \newpage

\end{document}